\documentclass[12pt,leqno]{article}

\usepackage{amsmath,amsthm,amsfonts,latexsym,amscd,amssymb}
\numberwithin{equation}{section}
\input xypic
\def\qed{{\hbadness=10000\hfill\ \vbox{\hrule height.09ex
   \hbox{\vrule width.09ex height1.55ex depth.2ex \kern1.8ex
   \vrule width.09ex height1.55ex depth.2ex}\hrule height.09ex}\break
   \bigskip}}
\linespread{1.6}
\addtolength{\textwidth}{2cm}
\addtolength{\textheight}{3.5cm}
\addtolength{\footskip}{1.cm}
\addtolength{\hoffset}{-1.3cm}
\addtolength{\voffset}{-1.6cm}
\addtolength{\skip\footins}{2mm}
\newtheorem{theorem}{Theorem}[section]
\newtheorem{lemma}{Lemma}[section]

\newtheorem{proposition}{Proposition}[section]

\theoremstyle{definition}

\theoremstyle{remark}

\begin{document}

\linespread{1}\title{\textbf{Finsler space subjected to a Kropina change with an \textsl{h}-vector}}

\author{M.$\,$K. \textsc{Gupta}\thanks{Corresponding author. E-mail: mkgiaps@gmail.com}~\\
\normalsize{Department of Pure $\&$ Applied Mathematics}\\
\normalsize{Guru Ghasidas Vishwavidyalaya}\\
\normalsize{Bilaspur (C.G.), India}\\ \\
P.$\,$N.$\,\,$\textsc{Pandey}\\
\normalsize{Department of Mathematics}\\
\normalsize{University of Allahabad}\\
\normalsize{Allahabad, India}}
\date{}
\maketitle

\linespread{1.3}\begin{abstract} In this paper, we discuss the Finsler spaces $(M^n,L)$ and $(M^n,\,^{*}L)$, where $^{*}L(x,y)$ is obtained from $L(x,y)$ by Kropina change $^{*}L(x,y)=\frac{L^2(x,y)}{b_i(x,y)\,y^i}$ and $b^{}_{i}(x,y)$ is an \textsl{h}-vector in $(M^n,L)$. We find the necessary and sufficient condition when the Cartan connection coefficients for both spaces $(M^n,L)$ and $(M^n,\,^{*}L)$ are the same. We also find the necessary and sufficient condition for Kropina change with an \textsl{h}-vector to be projective.
\newline\textbf{Keywords:} Finsler space, Kropina change, \textsl{h}-vector.\\2000 Mathematics Subject Classification:\textbf{ 53B40}.
\end{abstract}

\section{Introduction}
~~~~
In 1984, C. Shibata \cite{Sh84} dealt with a change of Finsler metric which is called a $\beta$-change of metric. A remarkable class of $\beta$-change is Kropina change $^{*}L(x,y)=\frac{L^2(x,y)}{b_i(x)\,y^i}$. If $L(x,y)$ is a metric function of a Riemannian space then $^{*}L(x,y)$ reduces to the metric function of a Kropina space. Kropina metric was first introduced by L. Berwald in connection with a two-dimensional Finsler space with rectilinear extremals and was investigated by V.K. Kropina \cite{Kr59,Kr61}. Kropina metric is simplest non-trivial Finsler metric having many interesting applications in Physics, Electron optics with a magnetic field, plants study for fungal fusion hypothesis, dissipative mechanics and irreversible thermodynamics \cite{An93,An12,As85,In87}. In 1978, C. Shibata \cite{Sh78} studied some basic local geometric properties of Kropina spaces. In 1991, M. Matsumoto obtained a set of necessary and sufficient conditions for a Kropina space to be of constant curvature \cite{Ma91}.

H. Izumi \cite{Iz80} while studying the conformal transformation of Finsler spaces, introduced the concept of \textsl{h}-vector $b_{i}$, which is \textsl{v}-covariant constant with respect to the Cartan connection and satisfies $L\,C^{h}_{ij}\,b_{h}=\rho\, h_{ij}\,,\,$ where $\rho$ is a non-zero scalar function, $C^{h}_{ij}$ are components of Cartan tensor and $h_{ij}$ are components of angular metric tensor. Thus if $b_{i}$ is an \textsl{h}-vector then
\begin{equation}(i)\,\,b_{i}|_{k}=0\,,\qquad\qquad\qquad(ii)\,\, L\,C^{h}_{ij}\,b_{h}=\rho\,h_{ij}\,.\end{equation}
This gives
\begin{equation}L\,\dot{\partial_{j}}b_{i}=\rho \,h_{ij}\,. \end{equation}
Since $\rho\neq 0$ and $h_{ij}\neq0$, the \textsl{h}-vector $b_{i}$ depends not only on positional coordinates but also on directional arguments. Izumi \cite{Iz80} showed that $\rho$ is independent of directional arguments. M. Matsumoto \cite{Ma74} discussed the Cartan connection of Randers change of Finsler metric, while B. N. Prasad \cite{Pr90} obtained the Cartan connection of $(M^n,\,^{*}L)$ where $^{*}L(x,y)$ is given by $^{*}L(x,y)=L(x,y)+b_i(x,y)\,y^i,$ and $b_i(x,y)$ is an \textsl{h}-vector. Present authors \cite{GuPa09,GuPa08} discussed the hypersurface of a Finsler space whose metric is given by certain transformations with an \textsl{h}-vector.
In this paper we obtain the relation between the Cartan connections of $F^{n}=(M^{n},L)$ and $^{*}F^{n}=(M^{n},\,^{*}L)$ where $^{*}L(x,y)$ is obtained by the transformation
\begin{equation}^{*}L(x,y)=\frac{L^2(x,y)}{b_i(x,y)\,y^i}\,,\end{equation}
and $b_{i}(x,y)$ is an \textsl{h}-vector in $(M^{n},L)$.
\\The paper is organized as follows: In section 2, we study how the fundamental metric tensor and the Cartan tensor change by Kropina change with an \textsl{h}-vector. The relation between the Cartan connection coefficients of both spaces is obtained in the section 3 and we find the necessary and sufficient condition when these connection coefficients are the same. In section 4, we find the necessary and sufficient condition for the Kropina change with an \textsl{h}-vector to be projective.


\section{The Finsler space $^{*}F^{n}=(M^{n},\,^{*}L)$}
~~~~Let $F^{n}=(M^{n},L)$ be an $n$-dimensional Finsler space equipped with the fundamental function $L(x,y)$. We consider a change of Finsler metric $^{*}L(x,y)$ which is defined by (1.3) and have another Finsler space $^{*}F^{n}=(M^{n},\,^{*}L)$. If we denote $b^{}_{i}\,y^i$ by $\beta$ then the indicatory property of $h_{ij}$ yields $\dot{\partial_{i}}\beta=b_i$. Throughout this paper, the geometric objects associated with $^{*}F^{n}$ will be asterisked. We shall use the notation $L_{i}=\dot{\partial_{i}}L=l_i\,,\,\,L_{ij}=\dot{\partial_{i}}\,\dot{\partial_{j}}L\,,\,\,L_{ijk}=\dot{\partial_{k}}\,L_{ij}\,,\ldots$ etc.
From (1.3), we get
\begin{equation}^{*}L_{i}=2\,\tau \,L_{i}-\tau^2\,b_{i}\,,\end{equation}
\begin{equation}^{*}L_{ij}=(2\tau-\rho\tau^2)\,L_{ij}+\frac{2\,\tau^2}{\beta}\,m_i\,m_j\,,\end{equation}
\begin{equation}\begin{split}^{*}L_{ijk}=&(2\tau-\rho\tau^2)\,L_{ijk}+\frac{2\,\tau}{\beta}\,(\rho\tau-1)\,(m_iL_{jk}+m_jL_{ik}+m_kL_{ij})\\
&-\frac{2\tau^2}{L\beta}(m_im_jl_k+m_jm_kl_i+m_km_il_j)-\frac{6\tau^2}{\beta^2}m_im_jm_k\,,\end{split}\end{equation}
where $\tau=L/\beta\,,\,\,m_{i}=b_{i}-\frac{1}{\tau}l_{i}$. The normalized supporting element, the metric tensor and Cartan tensor of $^{*}F^{n}$ are obtained as
\begin{equation}^{*}l_{i}=2\,\tau l_{i}-\tau^2\,b_{i}\,,\end{equation}
\begin{equation}^{*}g^{}_{ij}=(2\tau^2-\rho\tau^3)\,g^{}_{ij}+3\tau^4\,b_{i}b_{j}-4\tau^3(l_{i}b_{j}+b_{i}l_{j})+(4\tau^2+\rho\tau^3)l_{i}l_{j}\,, \end{equation}
\begin{equation}^{*}C_{ijk}=(2\tau^2-\rho\tau^3)\,C_{ijk}-\frac{\tau^2}{2\,\beta}(4-3\rho\tau)(h_{ij}m_{k}+h_{jk}m_{i}+h_{ki}m_{j})-\frac{6\tau^2}{\beta}m_im_jm_k\,.\end{equation}

\noindent For the computation of the inverse metric tensor, we use the following lemma \cite{Ma72}:
\begin{lemma}Let $(m^{}_{ij})$ be a non-singular matrix and $l^{}_{ij}=m^{}_{ij}+n^{}_in^{}_j$. The elements $l^{ij}_{}$ of the inverse matrix and the determinant of the matrix $(l^{}_{ij})$ are given by
\[l^{ij}_{}=m^{ij}_{}-(1+n^{}_{k}n^{k}_{})^{-1}\,n^in^j,\quad det(l_{ij})=(1+n^{}_{k}n^{k}_{})\,det(m_{ij})\]
respectively, where $m^{ij}_{}$ are elements of the inverse matrix of $(m^{}_{ij})$ and $n^k=m^{ki}_{}n_i$.
\end{lemma}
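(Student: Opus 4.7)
The plan is to verify both statements by direct computation, handling the inverse first since the determinant formula will follow more cleanly once the inverse is in hand.

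For the inverse, I would simply check that contracting $l_{ij}$ with the proposed candidate yields $\delta_i^k$. Set $\lambda = 1 + n_k n^k$ and compute
\begin{equation*}
l_{ij}\!\left(m^{jk}-\lambda^{-1} n^j n^k\right)
= (m_{ij}+n_i n_j)\!\left(m^{jk}-\lambda^{-1} n^j n^k\right).
\end{equation*}
Expanding and using $m_{ij} m^{jk}=\delta_i^k$, $m_{ij} n^j = n_i$, and $n_j n^j = \lambda - 1$, the cross terms collapse to $n_i n^k - \lambda^{-1} n_i n^k\bigl(1 + (\lambda-1)\bigr) = n_i n^k - n_i n^k = 0$, leaving the identity. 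This is the routine part and presents no difficulty beyond keeping index positions straight.

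For the determinant, the plan is to factor the problem through $(m^{ij})$. Writing $m^{ai} l_{ij} = \delta^a_j + n^a n_j$ and taking determinants gives
\begin{equation*}
\det(m^{ai})\,\det(l_{ij}) = \det\!\bigl(\delta^a_j + n^a n_j\bigr),
\end{equation*}
so the claim reduces to the identity $\det(\delta^a_j + n^a n_j) = 1 + n_k n^k$. I would establish this by the standard rank-one argument: the matrix $I + n\,n^{\top}$ (with one index raised) acts as the identity on the $(n-1)$-dimensional subspace orthogonal to $n_i$ in the sense $n_i v^i = 0$, and sends $n^a$ to $n^a(1 + n_k n^k)$. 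Hence its spectrum is $\{1,\dots,1,\,1+n_k n^k\}$ and its determinant equals $1+n_k n^k$. Rearranging the factored equation yields $\det(l_{ij}) = (1 + n_k n^k)\det(m_{ij})$.

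The only genuine obstacle, and it is a mild one, is the rank-one determinant step; everything else is bookkeeping. If one prefers not to invoke eigenvalues, the same identity can be obtained by noting that $\det(\delta^a_j + t\, n^a n_j)$ is a polynomial of degree at most one in $t$ (since $n^a n_j$ has rank one), evaluating at $t=0$ to get $1$, and differentiating once to obtain the coefficient $n_k n^k$; setting $t=1$ then gives the result. Either route is short, so I would pick whichever fits the paper's conventions best and move on.
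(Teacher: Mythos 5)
Your proof is correct, but note that the paper itself gives no proof of this lemma: it is quoted verbatim from Matsumoto \cite{Ma72} and used as a black box to invert $^{*}g_{ij}$, so there is no in-paper argument to compare against. Your two steps are both sound: the direct contraction $(m_{ij}+n_in_j)(m^{jk}-\lambda^{-1}n^jn^k)=\delta_i^k$ with $\lambda=1+n_kn^k$ checks out (it implicitly uses the symmetry of $m^{ij}$ so that $n_jm^{jk}=m^{kj}n_j=n^k$, which holds in the intended application to metric tensors), and the determinant identity $\det(\delta^a_j+n^an_j)=1+n_kn^k$ is the standard rank-one update; of your two suggested routes for it, the polynomial argument $\det(\delta^a_j+t\,n^an_j)=1+t\,n_kn^k$ is preferable since it avoids the degenerate case $n_kn^k=0$, where the eigenvector $n^a$ falls inside the hyperplane $n_jv^j=0$ and the spectral bookkeeping needs extra care.
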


The inverse metric tensor of $^{*}F^{n}$ is derived as follows:
\begin{equation}\begin{split}^{*}g^{ij}=(2\tau^2-\rho\tau^3)^{-1}&\Big[g^{ij}-\frac{2\tau}{2b^2\tau-\rho}b^i\,b^j+\frac{4-\rho\tau}{2b^2\tau-\rho}(l^{i}\,b^{j}+b^{i}\,l^{j})\\[5mm] &-\frac{3\rho b^2\tau^3-\rho^2\tau^2-4b^2\tau^2-2\rho\tau+8}{\tau(2b^2\tau-\rho)}l^i\,l^j\Big] \end{split}\end{equation}
where $b$ is the magnitude of the vector $b^{i}=g^{ij}b_{j}$.\\
From (2.6) and (2.7), we get
\begin{equation}\begin{split}^{*}C^{h}_{ij}=C^{h}_{ij}-\frac{(4-3\rho\tau)\tau}{2L(2-\rho\tau)}\big(h^{}_{ij}\,m^h+&h^h_j\,m_i+h^h_i\,m_j\big)-\frac{6\tau}{L(2-\rho\tau)}m_im_jm^h\\[3mm]
+\frac{2\tau\,b^h-(4-\rho\tau)l^h}{L(2-\rho\tau)(2b^2\tau-\rho)}\Big[&h^{}_{ij}\Big\{\frac{1}{2}m^2\tau(4-3\rho\tau)-\rho(2-\rho\tau)\Big\}\\
&+m_im_j\Big\{6\tau\,m^2+\tau(4-3\rho\tau)\Big\}\Big].\end{split}\end{equation}

\section{Cartan connection of the space $^{*}F^{n}$}
~~~~Let $C\,^{*}\Gamma=(\,^{*}F^{\,i}_{jk},\,^{*}N^{\,i}_{j},\,^{*}C^{\,i}_{jk})$ be the Cartan connection for the space $^{*}F^{n}=(M^{n},\,^{*}L)$. Since for a Cartan connection $L^{}_{i|j}=0$, we obtain
\begin{equation}\partial_j L_i=L_{ir}N^r_j+L_rF^{\,r}_{ij}\,.\end{equation}
Differentiating (2.1) with respect to $x^j$, we get
\begin{equation}\partial_j\,^{*}L_i=2\tau\,\partial_j L_i+2L_i\,\partial_j \tau-\tau^2\,\partial_j b_i-2\tau\,b_i\,\partial_j \tau\,.\end{equation}
This equation may be written in tensorial form as
\begin{equation}\begin{split} ^{*}L_{ir}\,^{*}N^r_j+\,^{*}L_r\,^{*}F^{\,r}_{ij}&=2\tau\,(L_{ir}N^r_j+L_rF^{\,r}_{ij})+\frac{2L_i}{\beta}(N^r_j\,L_r-\tau\beta_j-\tau N^r_jb_r)\\
&-\tau^2(b^{}_{i|j}+\rho L_{ir}N^{r}_{j}+b_rF^{\,r}_{ij})-\frac{2\tau}{\beta}\,b_i(N^r_j\,L_r-\tau\beta_j-\tau N^r_jb_r)\,,\end{split}\end{equation}
where $\beta^{}_{j}=\beta^{}_{|j}$. If we put
\begin{equation}^{*}F^{\,i}_{jk}=F^{\,i}_{jk}+D^{\,i}_{jk}\,,\end{equation}
then in view of (2.2), equation (3.3) may be written as
\begin{equation}(2\tau l_r-\tau^2b_r)\,D^{r}_{ij}+\Big\{(2\tau-\rho\tau^2)L_{ir}+\frac{2\tau^2}{\beta}\,m_im_r\Big\}D^{r}_{0j} =\frac{2\tau^2}{\beta}\,m_i\beta^{}_{j}-\tau^2b^{}_{i|j}\,, \end{equation}
where the subscript `$0$' denote the contraction by $y^i$.

In order to find the difference tensor $D^{i}_{jk}$, we construct supplementary equations to (3.5). From (2.2), we obtain
\begin{equation}\begin{split}\partial_k\,^{*}L_{ij}=&(2\tau-\rho\tau^2)\,\partial_k L_{ij}+L_{ij}(2\partial_k \tau-2\rho\tau\,\partial_k \tau-\tau^2\partial_k\rho)\\[3mm]
&+\frac{2\,\tau^2}{\beta}\,m_i\,\partial_k m_j+\frac{2\,\tau^2}{\beta}\,m_j\,\partial_k m_i+2m_im_j\frac{1}{\beta^2}\big(2\tau\beta\,\partial_k \tau-\tau^2\,\partial_k \beta\big)\,.\end{split}\end{equation}
From $L^{}_{ij|k}=0$, equation (3.6) is written in the form
\begin{equation*}\begin{split} ^{*}L_{ijr}\,^{*}N^r_k+\,^{*}L_{rj}\,^{*}F^{\,r}_{ik}+\,^{*}L_{ir}\,^{*}F^{\,r}_{jk} =&(2\tau-\rho\tau^2)\,\{L^{}_{ijr}N^r_k+L^{}_{rj}F^{\,r}_{ik}+L^{}_{ir}F^{\,r}_{jk}\}\\[2mm]
&+L_{ij}\Big\{\frac{2}{\beta}(1-\rho\tau)(-\tau\beta^{}_{k}-\tau\,N^r_km_r)-\tau^2\rho^{}_{k}\Big\}\\
&+\frac{2\tau^2}{\beta}m_{i}\Big\{F^{\,r}_{jk}m_r+N^r_kL_{jr}\Big(\rho-\frac{1}{\tau}\Big)-\frac{1}{\beta\tau}N^r_kl_jm_r\Big\}\\
&+\frac{2\tau^2}{\beta}m_{j}\Big\{F^{\,r}_{ik}m_r+N^r_kL_{ir}\Big(\rho-\frac{1}{\tau}\Big)-\frac{1}{\beta\tau}N^r_kl_im_r\Big\}\\
&+\frac{2}{\beta^2}m_im_j\Big\{2\tau(-\tau\beta^{}_{k}-\tau\,N^r_km_r)-\tau^2(\beta^{}_{k}+N^r_kb_r)\Big\},\end{split}\end{equation*}
where $\rho^{}_{k}=\rho^{}_{|k}=\partial_k\rho$. In view of (2.2), (2.3) and (3.4), above equation is written as
\begin{equation}\begin{split} &(2\tau-\rho\tau^2)\,\{L^{}_{ijr}D^r_{0k}+L^{}_{rj}D^{r}_{ik}+L^{}_{ir}D^{r}_{jk}\}+\frac{2\tau^2}{\beta}m_r(m_jD^{r}_{ik}+m_iD^{r}_{jk})\\[2mm]
&-\frac{2\tau^2}{\beta^2}(m_im_jb_r+m_jm_rb_i+m_rm_ib_j)D^{r}_{0k}+\frac{2\tau}{\beta}\beta^{}_{k}L_{ij}-\frac{2\rho\tau^2}{\beta}\,\beta^{}_{k}L_{ij}\\[2mm]
&+\frac{2\tau^2}{\beta}(\rho-\frac{1}{\tau})(L_{jr}m_i+L_{ir}m_j+L_{ij}m_r)D^{r}_{0k}+\frac{6\tau^2}{\beta^2}\,\beta^{}_{k}m_im_j+\tau^2\rho^{}_{k}L_{ij}=0.\end{split}\end{equation}

\noindent Now we will prove:
\begin{proposition} The difference tensor $D^{i}_{jk}$ is completely determined by the equations (3.5) and (3.7).
\end{proposition}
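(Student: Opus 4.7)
The strategy is to treat (3.5) and (3.7) as a linear system in the components of $D^i_{jk}$ and solve it by cyclic symmetrization, in the same spirit as the Christoffel symbols are recovered from $\nabla g=0$. Since both $F^i_{jk}$ and $^{*}F^i_{jk}$ are symmetric in their lower indices, $D^i_{jk}$ inherits $D^i_{jk}=D^i_{kj}$, and in particular $D^i_{0j}=D^i_{jk}\,y^k$, so the two occurrences of the unknown in (3.5) are linked. First I would contract (3.5) with $y^i$: the terms $L_{ir}y^i$ and $m_iy^i$ both vanish by homogeneity and by the definition of $m_i$, leaving $(2\tau l_r-\tau^2 b_r)D^r_{0j}=-\tau^2\beta^{}_{j}$, which, combined with the original (3.5), lets $D^r_{0j}$ be expressed purely in terms of $D^r_{ij}$ and data already known in $F^n$.

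To determine $D^r_{jk}$ itself I would mimic the classical Christoffel argument. Equation (3.7) is symmetric in $(i,j)$ and $D^r_{jk}=D^r_{kj}$, so writing (3.7) three times with the index triple cycled through $(i,j,k)$, $(j,k,i)$, $(k,i,j)$ and forming the combination $(3.7)_{ijk}+(3.7)_{kij}-(3.7)_{jki}$ cancels the $D^r_{ij}$ and $D^r_{ki}$ contributions and isolates a term of the shape $(2\tau-\rho\tau^2)L_{rj}\,D^r_{ik}$ (plus rank-one perturbations built from $m_j m_r$). All $D^r_{0\bullet}$ pieces have by now been eliminated via the previous step, and the remaining inhomogeneous terms are known quantities built from $\beta^{}_{k}$, $\rho^{}_{k}$, $m_i$, $h_{ij}$ and the original Cartan data. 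Non-degeneracy of $g_{ij}$ together with the scalar factor $(2\tau-\rho\tau^2)$ being non-zero then allows one to invert and solve for $D^r_{jk}$ in closed form, after which back-substitution into the formula derived from (3.5) delivers $D^r_{0j}$ explicitly.

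The main technical obstacle is verifying that the coefficient operator acting on $D^r_{jk}$ in the cyclically symmetrized equation is actually non-singular. Alongside the dominant metric-type piece $(2\tau-\rho\tau^2)L_{rj}$, one picks up rank-one perturbations such as $\tfrac{2\tau^2}{\beta}m_jm_r$ and mixed $m$--$b$--$l$ contributions; decomposing these in the $h$-adapted frame $\{l_i,m_i\}$ and applying Lemma 2.1 (equivalently the Sherman--Morrison identity) shows that the perturbed operator is invertible precisely when $2\tau-\rho\tau^2\neq0$, which is the same condition that makes $^{*}g_{ij}$ in (2.5) non-degenerate and hence is automatic here. Existence of a solution is free of charge, because (3.5) and (3.7) were derived from the defining Cartan identities $^{*}L_{i|j}=0$ and $^{*}L_{ij|k}=0$, which the Cartan connection always satisfies; uniqueness follows from the invertibility just described, so $D^i_{jk}$ is completely determined by (3.5) and (3.7).
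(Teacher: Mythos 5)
Your overall strategy (Christoffel symmetrization of (3.7) plus a rank-one perturbation argument) points in the right direction, but there is a genuine gap at the decisive step: the coefficient operator you propose to invert is singular. By (2.2), $^{*}L_{ir}=(2\tau-\rho\tau^2)L_{ir}+\tfrac{2\tau^2}{\beta}m_im_r$ with $L_{ir}=h_{ir}/L$, and both $L_{ir}$ and $m_r$ annihilate $y^r$; hence $^{*}L_{ir}$ has rank $n-1$ for \emph{every} value of $2\tau-\rho\tau^2$, and Lemma 2.1 does not apply to it (that lemma requires the unperturbed matrix to be non-singular, which the angular metric is not). Consequently the cyclically symmetrized (3.7) alone leaves the component of $D^r_{ik}$ along $y^r$ completely undetermined, and the assertion that the operator is ``invertible precisely when $2\tau-\rho\tau^2\neq0$'' is false. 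This is exactly why the paper pairs each such rank-$(n-1)$ tensor equation with a supplementary scalar equation of the form $^{*}L_rA^r=B$ extracted from (3.5) (its Lemma 3.1, used in equations (3.25)--(3.26)); the scalar equation supplies the missing component because $^{*}L_ry^r={}^{*}L\neq0$. Your $y^i$-contraction of (3.5) is precisely the right kind of supplementary equation, but you use it only at the $D^r_{0j}$ stage and drop it at the final stage, where it is indispensable.

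A second, related problem is the order of elimination. Your claim that the $y^i$-contraction of (3.5), ``combined with the original (3.5)'', expresses $D^r_{0j}$ in terms of $D^r_{ij}$ and known data is not a determination: the term $(2\tau l_r-\tau^2b_r)D^r_{ij}$ in (3.5) is itself an unknown symmetric tensor, so after your substitution the symmetrized (3.7) still couples $D^r_{ik}$ to the $D^r_{0\bullet}$ terms nontrivially, and the statement that ``all $D^r_{0\bullet}$ pieces have by now been eliminated'' is unjustified. The paper resolves this by a three-stage cascade: the antisymmetric part of (3.5) (which kills the symmetric unknown $^{*}L_rD^r_{ij}$) together with $y$-contractions first determines $D^i_{00}$ in closed form (3.19); this feeds into the source terms $G_{ij}$ and $G_j$ that determine $D^i_{0j}$ (3.24); only then are (3.12) and (3.14), whose right-hand sides now contain only known quantities, solved for $D^i_{jk}$ (3.29). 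To repair your argument you would need to (i) take the antisymmetric part of (3.5) so as to eliminate $^{*}L_rD^r_{ij}$, (ii) work down the cascade $D^i_{00}\to D^i_{0j}\to D^i_{jk}$, and (iii) at each stage solve the pair consisting of the rank-$(n-1)$ tensor equation and the scalar equation, rather than attempting a Sherman--Morrison inversion of a singular operator.
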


To prove this, first we will prove a lemma:
\begin{lemma} The system of algebraic equations
\begin{equation*}(i)\,\,^{*}L_{ir}A^r=B_i\,,\qquad\qquad\qquad(ii)\,\,^{*}L_{r}A^r=B\,,\end{equation*}
has a unique solution $A^r$ for given $B$ and $B_i$.
\end{lemma}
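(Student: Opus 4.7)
The plan is to combine (i) and (ii) into a single equation involving the fundamental metric tensor $^{*}g_{ij}$ of $^{*}F^{n}$, which is nondegenerate by construction. The key ingredient is the standard Finsler identity
\[^{*}g^{}_{ir} = \,^{*}L\cdot\,^{*}L_{ir} + \,^{*}L_i\,^{*}L_r,\]
which follows from $^{*}L$ being positively homogeneous of degree one in $y$. Multiplying equation (i) by $^{*}L$ and adding $^{*}L_i$ times equation (ii), the two left-hand sides fuse into $^{*}g_{ir}A^r$, yielding the single vector equation
\[^{*}g^{}_{ir}\,A^r = \,^{*}L\,B_i + B\,^{*}L_i.\]

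Since $^{*}g_{ij}$ is invertible, with inverse $^{*}g^{ij}$ explicitly obtained in (2.8), this equation has the unique solution
\[A^r = \,^{*}g^{ri}\bigl(\,^{*}L\,B_i + B\,^{*}L_i\bigr).\]
Uniqueness for the original system is then immediate, since every solution of (i) and (ii) satisfies the combined equation. For existence I would substitute the candidate $A^r$ back and verify (i) and (ii) separately: contracting the combined equation with $y^i$ and using $^{*}g_{ir}y^i = \,^{*}L\,^{*}L_r$ together with $^{*}L_i y^i = \,^{*}L$ recovers (ii), and then subtracting $^{*}L_i$ times (ii) from the combined equation and dividing by $^{*}L$ recovers (i).

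The main subtle point, and the real obstacle, is a hidden compatibility condition: because $^{*}L$ is $1$-homogeneous we have $^{*}L_{ir}\,y^r = 0$, so $y^r$ lies in the kernel of the matrix in (i) and consequently (i) is only consistent when $B_i y^i = 0$. The role of equation (ii), together with $^{*}L_r y^r = \,^{*}L \neq 0$, is precisely to pin down the $y^r$-component of $A^r$ that (i) leaves undetermined. The lemma must therefore be read with the tacit compatibility $B_i y^i = 0$; in the intended application to equations (3.5) and (3.7) this is automatic from the way those tensorial identities were derived, so the lemma can be applied without further checking.
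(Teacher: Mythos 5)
Your proof is correct, but it follows a genuinely different route from the paper. You invoke the general identity $^{*}g^{}_{ir}={}^{*}L\,{}^{*}L_{ir}+{}^{*}L_i\,{}^{*}L_r$ to fuse $(i)$ and $(ii)$ into $^{*}g^{}_{ir}A^r={}^{*}L\,B_i+B\,{}^{*}L_i$ and then settle everything by the nonsingularity of $^{*}g^{}_{ij}$; the paper instead substitutes the explicit decomposition $^{*}L_{ij}=(2\tau-\rho\tau^2)\frac{1}{L}(g_{ij}-l_il_j)+\frac{2\tau^2}{\beta}m_im_j$ into $(i)$, contracts with $b^i$ to isolate $m_rA^r$, uses $(ii)$ to isolate $l_rA^r$, and back-substitutes to obtain $g_{ir}A^r$ and hence the closed-form solution (3.11) expressed entirely in the unstarred objects $g^{ij},\,l^i,\,m^i$. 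Your argument is shorter and more conceptual --- it makes transparent why exactly one supplementary scalar equation $(ii)$ is needed (namely to fix the $y^r$-component killed by $^{*}L_{ir}$), and your remark that $(i)$ forces the compatibility condition $B_iy^i=0$ (since $^{*}L_{ir}y^r=0$) is a genuine point the paper leaves tacit. What the paper's computation buys, and what your version defers, is the explicit formula (3.11) in terms of the original metric: that formula is not a by-product of the lemma but the actual payload, reused verbatim to produce (3.19), (3.24) and (3.29). To extract it from your solution $A^r={}^{*}g^{ri}({}^{*}L\,B_i+B\,{}^{*}L_i)$ you would still have to substitute the inverse metric (2.8) and simplify, which amounts to redoing the paper's contraction argument. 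So your proof establishes the lemma as stated (indeed slightly more carefully, since you also verify that the candidate solves both equations under the compatibility condition), but for the purposes of Proposition 3.1 the paper's concrete derivation is the part that does the work.
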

\begin{proof} It follows from (2.2) that $(i)$ is written in the form
\begin{equation}\Big\{(2\tau-\rho\tau^2)\frac{1}{L}(g_{ir}-l_il_r)+\frac{2\tau^2}{\beta}m_im_r\Big\}A^{\,r}_{}=B_i.\end{equation}
Contracting by $b^i$, we get
\begin{equation*}\Big\{(2\tau-\rho\tau^2)\frac{1}{L}\Big(b_{r}-\frac{1}{\tau}l_r\Big)+\frac{2\tau^2}{\beta}m^2\,m_r\Big\}A^{\,r}_{}=B^{}_{\beta},\end{equation*}
i.e.
\begin{equation}m_r\,A^{\,r}_{}=B^{}_{\beta}\Big(\frac{2\tau^2\,b^2}{\beta}-\frac{\rho\tau^2}{L}\Big)^{-1},\end{equation}
where the subscript $\beta$ denote the contraction by $b^i$, i.e. $B^{}_{\beta}=B^{}_{i}b^i$.\\
Also from (2.1), equation $(ii)$ is written in the form
\[(2\tau\,l_r-\tau^2b_r)\,A^r=B\]
i.e.
\begin{equation}\tau^2m_rA^r-\tau\,l_rA^r=-B.\end{equation}
Using (3.9) in (3.10), we get
\begin{equation*}l_r\,A^{\,r}_{}=\tau^{-1}B+\tau\,B_{\beta}\,\Big(\frac{2\tau^2\,b^2}{\beta}-\frac{\rho\tau^2}{L}\Big)^{-1}.\end{equation*}
Then (3.8) is written as
\begin{equation*}g_{ir}\,A^{\,r}_{}=\frac{L}{2\tau-\rho\tau^2}B_i+l_i\,\Big\{\tau^{-1}B+\tau\,B_{\beta}\,\Big(\frac{2\tau^2\,b^2}{\beta}-\frac{\rho\tau^2}{L}\Big)^{-1}\Big\}-\frac{2\tau^2}{2-\rho\tau}\,m_i\,B_{\beta}\,\Big(\frac{2\tau^2\,b^2}{\beta}-\frac{\rho\tau^2}{L}\Big)^{-1}.\end{equation*}
This gives
\begin{equation}A^{\,i}_{}=\frac{L}{2\tau-\rho\tau^2}B^i+l^i\,\Big\{\tau^{-1}B+\tau\,B_{\beta}\,\Big(\frac{2\tau^2\,b^2}{\beta}-\frac{\rho\tau^2}{L}\Big)^{-1}\Big\}-\frac{2\tau^2}{2-\rho\tau}\,m^i\,B_{\beta}\,\Big(\frac{2\tau^2\,b^2}{\beta}-\frac{\rho\tau^2}{L}\Big)^{-1},\end{equation}
which is the concrete form of the solution $A^i$.
\end{proof}

We are now in a position to prove the proposition.

\noindent Taking the symmetric and anti-symmetric parts of (3.5), we get
\begin{equation}\begin{split}2(2\tau l_r&-\tau^2b_r)\,D^{r}_{ij}+\Big\{(2\tau-\rho\tau^2)L_{ir}+\frac{2\tau^2}{\beta}\,m_im_r\Big\}D^{r}_{0j}\\
&+\Big\{(2\tau-\rho\tau^2)L_{jr}+\frac{2\tau^2}{\beta}\,m_jm_r\Big\}D^{r}_{0i}=\frac{2\tau^2}{\beta}\,(m_i\beta^{}_{j}+m_j\beta^{}_{i})-2\tau^2E^{}_{ij}\,, \end{split}\end{equation}
and
\begin{equation}\begin{split}\Big\{(2\tau-\rho\tau^2)L_{ir}+&\frac{2\tau^2}{\beta}\,m_im_r\Big\}D^{r}_{0j}-\Big\{(2\tau-\rho\tau^2)L_{jr}+\frac{2\tau^2}{\beta}\,m_jm_r\Big\}D^{r}_{0i}\\ &=\frac{2\tau^2}{\beta}\,(m_i\beta^{}_{j}-m_j\beta^{}_{i})-2\tau^2F^{}_{ij}\,, \end{split}\end{equation}
where we put $2E^{}_{ij}=b^{}_{i|j}+b^{}_{j|i}$ and $2F^{}_{ij}=b^{}_{i|j}-b^{}_{j|i}$.

On the other hand, applying Christoffel process with respected to indices $i,j,k$ in equation (3.7), we get
\begin{equation}\begin{split} &(2\tau-\rho\tau^2)\,\big\{L^{}_{ijr}D^{r}_{0k}+L^{}_{jkr}D^{r}_{0i}-L^{}_{kir}D^r_{0j}\big\}+2D^{r}_{ik}\Big\{(2\tau-\rho\tau^2)L^{}_{rj}+\frac{2\tau^2}{\beta}m_rm_j\Big\}\\[3mm]
&-\frac{2\tau}{\beta}\Big\{\beta^{}_{k}\big((\rho\tau-1)L_{ij}-\frac{3\tau}{\beta}m_im_j\big)+\beta^{}_{i}\big((\rho\tau-1)L_{jk}-\frac{3\tau}{\beta}m_jm_k\big)-\beta^{}_{j}\big((\rho\tau-1)L_{ki}-\frac{3\tau}{\beta}m_km_i\big)\Big\}\\[3mm]
&+\frac{2\tau}{\beta}D^{r}_{0k}\,\mathfrak{S}_{(ijr)}m_i\Big\{(\rho\tau-1)L_{jr}-\frac{\tau}{\beta}m_jb_r\Big\}+\frac{2\tau}{\beta}D^{r}_{0i}\,\mathfrak{S}_{(jkr)}m_j\Big\{(\rho\tau-1)L_{kr}-\frac{\tau}{\beta}m_kb_r\Big\}\\[3mm]
&-\frac{2\tau}{\beta}D^{r}_{0j}\,\mathfrak{S}_{(kir)}m_k\Big\{(\rho\tau-1)L_{ir}-\frac{\tau}{\beta}m_ib_r\Big\}+\tau^2(\rho^{}_{k}L_{ij}+\rho^{}_{i}L_{jk}-\rho^{}_{j}L_{ki})=0,
\end{split}\end{equation}
where $\mathfrak{S}_{(ijk)}$ denote cyclic interchange of indices $i,j,k$ and summation. Contracting (3.12) and (3.13) by $y^j$, we get
\begin{equation}(4\tau\,l^{}_{r}-2\tau^2b^{}_{r})D^{\,r}_{0i}+\Big\{(2\tau-\rho\tau^2)L_{ir}+\frac{2\tau^2}{\beta}m_im_r\Big\}D^{\,r}_{00}=\frac{2\tau^2}{\beta}\beta^{}_{0}m_i-2\tau^2E^{}_{i0},\end{equation}
and
\begin{equation*}\Big\{(2\tau-\rho\tau^2)L_{ir}+\frac{2\tau^2}{\beta}m_im_r\Big\}D^{\,r}_{00}=\frac{2\tau^2}{\beta}\beta^{}_{0}m_i-2\tau^2F^{}_{i0},\end{equation*}
i.e.
\begin{equation} ^{*}L_{ir}D^{\,r}_{00}=\frac{2\tau^2}{\beta}\beta^{}_{0}m_i-2\tau^2F^{}_{i0},\end{equation}
where $\beta^{}_{0}=\beta^{}_{j}y^j$. Similarly contraction of (3.14) by $y^k$ gives
\begin{equation}\begin{split} &(2\tau-\rho\tau^2)\,\big\{L^{}_{ijr}D^{r}_{00}-L^{}_{jr}D^{r}_{0i}+L^{}_{ir}D^r_{0j}\big\}+2D^{r}_{0i}\big\{(2\tau-\rho\tau^2)L^{}_{rj}+\frac{2\tau^2}{\beta}m_rm_j\big\}\\[2mm]
&+\frac{2\tau}{\beta}D^{r}_{00}\,\mathfrak{S}_{(ijr)}m_i\big\{(\rho\tau-1)L_{jr}-\frac{\tau}{\beta}m_jb_r\big\}-\frac{2\tau^2}{\beta}D^{r}_{0i}\,m_jm_r+\frac{2\tau^2}{\beta}D^{r}_{0j}\,m_im_r\\[2mm]
&-\frac{2\tau}{\beta}\beta^{}_{0}\Big((\rho\tau-1)L_{ij}-\frac{3\tau}{\beta}m_im_j\Big)+\tau^2\rho^{}_{0}L_{ij}=0\,,
\end{split}\end{equation}
Contraction of (3.15) by $y^i$ gives
\begin{equation*}(2\tau\,l^{}_{r}-\tau^2b^{}_{r})D^{\,r}_{00}=-\tau^2E^{}_{00},\end{equation*}\\[-10mm]
i.e.\\[-10mm]
\begin{equation} ^{*}L^{}_{r}D^{\,r}_{00}=E^{}_{00}.\end{equation}

We can apply Lemma 3.1 to equations (3.16) and (3.18) to obtain
\begin{equation}\begin{split}D^{\,i}_{00}=&l^i\,\tau\,\Big\{\Big(\frac{2}{\beta}\beta^{}_{0}m^2-2F^{}_{\beta 0}\Big)\Big(\frac{2}{\beta}b^2-\frac{\rho}{L}\Big)^{-1}-E^{}_{00}\Big\}\\
&-\frac{2\tau^2}{2-\rho\tau}\Big(\frac{2}{\beta}\beta^{}_{0}m^2-2F^{}_{\beta 0}\Big)\Big(\frac{2}{\beta}b^2-\frac{\rho}{L}\Big)^{-1}\,m^i+\frac{2L\tau}{2-\rho\tau}\Big(\frac{1}{\beta}\,\beta^{}_{0}\,m^i-F^{\,i}_{0}\Big),\end{split}\end{equation}
where $F^{i}_{0}=g^{ij}F_{j0}$. Also note that $E_{00}=E_{ij}\,y^i\,y^j=b_{i|j}\,y^i\,y^j=(b_{i}\,y^i)_{|j}y^j=\beta^{}_{|0}=\beta^{}_{0}$.

\noindent Now adding (3.13) and (3.17), we obtain
\begin{equation*} D^{\,r}_{0j}\Big\{(2\tau-\rho\tau^2)L_{ir}+\frac{2\tau^2}{\beta}m_im_r\Big\}=G_{ij}\,,\end{equation*}
i.e.
\begin{equation} ^{*}L_{ir}\,D^{\,r}_{0j}=G_{ij}\,,\end{equation}
where we put
\begin{equation}\begin{split}G_{ij}&=\frac{\tau^2}{\beta}(m_i\beta_j-m_j\beta_i)-\tau^2\,F_{ij}-\frac{1}{2}(2\tau-\rho\tau^2)L_{ijr}D^{\,r}_{00}-\frac{\tau^2}{2}\rho^{}_{0}L_{ij}\\[3mm]
&-\frac{\tau}{\beta}D^{\,r}_{00}\,\mathfrak{S}_{(ijr)}m_i\Big\{(\rho\tau-1)L_{jr}-\frac{\tau}{\beta}m_jb_r\Big\}+\frac{\tau}{\beta}\beta^{}_{0}\Big\{(\rho\tau-1)L_{ij}-\frac{3\tau}{\beta}m_im_j\Big\}\,.\end{split}\end{equation}\\[-3mm]
The equation (3.15) is written in the form
\begin{equation*}(2\tau\,l_r-\tau^2\,b_r)D^{\,r}_{0j}=G_{j}\,,\end{equation*}\\[-11mm]
i.e.\\[-11mm]
\begin{equation}^{*}L^{}_{r}D^{\,r}_{0j}=-\frac{1}{\tau^2}\,G_{j}\,,\end{equation}
where
\[G_{j}=\frac{\tau^2}{\beta}\,\beta^{}_{0}m_j-\tau^2E_{j0}-\Big\{\frac{1}{2}(2\tau-\rho\tau^2)L_{jr}+\frac{\tau^2}{\beta}m_jm_r\Big\}\,D^{\,r}_{00}.\]
In view of (3.16), $G_j$ are written as
\begin{equation}G_{j}=\tau^2(F_{j0}-E_{j0})\,.\end{equation}
Applying Lemma 3.1 to equations (3.20) and (3.22) to obtain 
\begin{equation}D^{\,i}_{0j}=l^i\Big\{\frac{1}{\tau}\Big(\frac{2}{\beta}b^2-\frac{\rho}{L}\Big)^{-1}G_{\beta j}+\frac{1}{\tau}G^{}_{j}\Big\}-\frac{2\,m^i}{2-\rho\tau}\Big(\frac{2}{\beta}b^2-\frac{\rho}{L}\Big)^{-1}G_{\beta j}+\frac{L}{2\tau-\rho\tau^2}G^{\,i}_{j}\,, \end{equation}
where $G^{\,i}_j=g^{ik}G_{kj}$.

Finally we solve (3.12) and (3.14) for $D^{\,i}_{jk}$. These equations may be written as
\begin{equation} ^{*}L^{}_{rj}\,D^{r}_{ik}=H^{}_{jik}\,,\end{equation}
and
\begin{equation} ^{*}L^{}_{r}\,D^{r}_{ik}=H^{}_{ik}\,,\end{equation}
where
\begin{equation}\begin{split} H^{}_{jik}&=\frac{(\rho\tau^2-2\tau)}{2}\,\big\{L^{}_{ijr}D^{r}_{0k}+L^{}_{jkr}D^{r}_{0i}-L^{}_{kir}D^r_{0j}\big\}\\
&-\frac{\tau}{\beta}D^{r}_{0k}\,\mathfrak{S}_{(ijr)}m_i\Big\{(\rho\tau-1)L_{jr}-\frac{\tau}{\beta}m_jb_r\Big\}-\frac{\tau}{\beta}D^{r}_{0i}\,\mathfrak{S}_{(jkr)}m_j\Big\{(\rho\tau-1)L_{kr}-\frac{\tau}{\beta}m_kb_r\Big\}\\
&+\frac{\tau}{\beta}D^{r}_{0j}\,\mathfrak{S}_{(kir)}m_k\Big\{(\rho\tau-1)L_{ir}-\frac{\tau}{\beta}m_ib_r\Big\}-\frac{\tau^2}{2}(\rho^{}_{k}L_{ij}+\rho^{}_{i}L_{jk}-\rho^{}_{j}L_{ki})\\
&+\frac{\tau}{\beta}\Big\{\beta^{}_{k}\big((\rho\tau-1)L_{ij}-\frac{3\tau}{\beta}m_im_j\big)+\beta^{}_{i}\big((\rho\tau-1)L_{jk}-\frac{3\tau}{\beta}m_jm_k\big)\\
&-\beta^{}_{j}\big((\rho\tau-1)L_{ki}-\frac{3\tau}{\beta}m_km_i\big)\Big\},
\end{split}\end{equation}
and
\begin{equation}H^{}_{ik}=\frac{\tau^2}{\beta}(m_i\beta^{}_{k}+m^{}_{k}\beta^{}_{i})-\tau^2E_{ik}-\frac{1}{2}(G_{ik}+G_{ki})\,.\end{equation}
Again applying Lemma 3.1 to equations (3.25) and (3.26) to obtain
\begin{equation}D^{j}_{ik}=l^j\Big\{\frac{1}{\tau}\Big(\frac{2}{\beta}b^{2}-\frac{\rho}{L}\Big)^{-1}H^{}_{\beta ik}+\frac{1}{\tau}H_{ik}\Big\}-\frac{2\,m^j}{2-\rho\tau}\Big(\frac{2}{\beta}b^{2}-\frac{\rho}{L}\Big)^{-1}H^{}_{\beta ik}+\frac{L}{2\tau-\rho\tau^2}H^{\,j}_{ik}\,,\end{equation}
where we put $H^{\,j}_{ik}=g^{jm}H^{}_{mik}$. This completes the Proposition 3.1.

We now propose a lemma:
\begin{lemma}\label{lem2} If the \textsl{h}-vector is gradient then the scalar $\rho$ is constant.\end{lemma}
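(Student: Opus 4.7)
The plan is to combine the defining identity of the $h$-vector with the gradient hypothesis and the earlier result of Izumi cited in Section~1. Izumi showed that $\rho$ is independent of the directional arguments $y^{i}$, so $\dot{\partial}_{r}\rho=0$ and $\rho=\rho(x)$. Consequently $\rho_{|k}=\partial_{k}\rho - N^{r}_{k}\dot{\partial}_{r}\rho = \partial_{k}\rho$, and proving $\rho$ constant is equivalent to showing $\rho_{|k}=0$ for every $k$.

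Next I would apply the horizontal covariant derivative $|_{k}$ of the Cartan connection to the $h$-vector relation $L\,C^{h}_{ij}\,b_{h}=\rho\,h_{ij}$ from (1.1)(ii). Since the Cartan connection satisfies $L_{|k}=0$, $h_{ij|k}=0$, and (by (1.1)(i)) $b_{i}|_{k}=0$, this produces
\begin{equation*}
L\,C^{h}_{ij|k}\,b_{h} + L\,C^{h}_{ij}\,b_{h|k} = \rho_{|k}\,h_{ij}. \qquad (\star)
\end{equation*}
I would interpret ``gradient $h$-vector'' in the natural sense $b_{i|j}=b_{j|i}$, equivalently $F_{ij}=0$ in the notation introduced after (3.13). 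Under this symmetry the second summand on the left of $(\star)$ becomes $L\,C^{h}_{ij}\,b_{k|h}$. Antisymmetrizing $(\star)$ in the pair $(j,k)$, using Bianchi-type identities for $C^{h}_{ij|k}$ in the Cartan connection together with the total symmetry $C_{ijh}=C_{(ijh)}$ of the Cartan tensor, and then contracting with $g^{ij}$ (employing $g^{ij}h_{ij}=n-1$) or with $b^{i}$ (employing $h_{ij}b^{i}=m_{j}$), together with the standard indicator relations $C^{h}_{ij}y^{i}=0$, $C^{h}_{ij|k}y^{i}=0$, and $h_{ij}y^{j}=0$, should collapse the Cartan-tensor contributions and leave a scalar equation of the form (nonzero factor)$\,\cdot\,\rho_{|k}=0$. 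This forces $\rho_{|k}=0$, hence $\partial_{k}\rho=0$.

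The main obstacle is this last cancellation: one must carry out the index gymnastics so that the two Cartan-tensor terms in $(\star)$ really do combine away after the gradient symmetry is invoked, leaving a clean equation for $\rho_{|k}$. If the direct antisymmetrize-and-contract route proves insufficient, a backup is to differentiate $(\star)$ vertically by applying $\dot{\partial}_{m}$ and use that $\dot{\partial}_{m}\rho_{|k}=0$ (since $\rho_{|k}=\partial_{k}\rho$ depends only on $x$) together with the commutation formulas between $\dot{\partial}_{m}$ and $|_{k}$ in the Cartan connection to pin down $\rho_{|k}=0$.
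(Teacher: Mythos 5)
Your starting point diverges from the paper's in a way that creates a real gap. The paper does not differentiate the contracted relation $L\,C^{h}_{ij}b_{h}=\rho\,h_{ij}$; it differentiates the equivalent identity (1.2), $L\,\dot{\partial}_{j}b_{i}=\rho\,h_{ij}$, h-covariantly, and then applies the $\dot{\partial}$--$|$ commutation formula to the \emph{vector} $b_{i}$ itself. Antisymmetrizing, the only surviving left-hand contribution is exactly $2L\,\dot{\partial}_{j}F_{ik}$ where $2F_{ik}=b_{i|k}-b_{k|i}$ is the curl, so the identity $2L\,\dot{\partial}_{j}F_{ik}=\rho_{|k}h_{ij}-\rho_{|i}h_{jk}$ holds, and the gradient hypothesis $F_{ik}=0$ annihilates the left side outright. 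Contraction with $y^{k}$ then gives $\rho_{|0}=0$, and a vertical differentiation combined with Izumi's result that $\rho=\rho(x)$ yields $\rho_{|j}=0$. In your route, by contrast, the term $L\,C^{h}_{ij|k}b_{h}$ appears, and the gradient hypothesis gives you no control over $C^{h}_{ij|k}$: it constrains only the skew part of $b_{i|j}$, not the h-covariant derivative of the Cartan tensor. The cancellation you hope for under antisymmetrization in $(j,k)$ is not a Bianchi-type identity (the skew part $C_{hij|k}-C_{hik|j}$ is governed by the $(v)hv$-torsion $P_{hijk}$, which does not vanish in general), and the contractions you list do not remove it either: $g^{ij}C^{h}_{ij|k}b_{h}$ is the derivative of the torsion vector contracted with $b$, and $y^{i}$-contraction just gives $0=0$. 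So the step you yourself flag as ``the main obstacle'' is precisely where the argument breaks down.

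The fix is essentially to retreat one step: instead of (1.1)(ii), use (1.2), whose h-covariant derivative is $L(\dot{\partial}_{j}b_{i})_{|k}=\rho_{|k}h_{ij}$ with no Cartan-tensor derivative at all; the commutation formula for $\dot{\partial}_{j}$ and $|_{k}$ acting on $b_{i}$ then converts $(\dot{\partial}_{j}b_{i})_{|k}$ into $\dot{\partial}_{j}(b_{i|k})$ plus terms that are symmetric in the relevant indices, and the skew part in $(i,k)$ is exactly the curl. Your concluding step (vertical differentiation of $\rho_{|0}=0$ using $\dot{\partial}_{j}(\rho_{|k})-(\dot{\partial}_{j}\rho)_{|k}=-(\dot{\partial}_{r}\rho)C^{r}_{jk|0}$ and $\rho=\rho(x)$) matches the paper's and is fine once the clean equation $\rho_{|k}h_{ij}-\rho_{|i}h_{jk}=0$ is in hand.
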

\begin{proof}Taking \textsl{h}-covariant derivative of (1.2) and using $L_{|k}=0$ and $h_{ij|k}=0$, we get
\begin{equation*}L\,(\dot{\partial_{j}}b_{i})_{|k}=\rho^{}_{|k}\,h_{ij}\,.\end{equation*}
Utilizing the commutation formula exhibited by
\[\dot{\partial}_{k}(T^{\,i}_{j|h})-(\dot{\partial}_{k}\,T^{\,i}_{j})^{}_{|h}=T^{\,r}_{j}\,\dot{\partial}_{k}F^{i}_{rh}-T^{\,i}_{r}\,\dot{\partial}_{k}F^{r}_{jh}-(\dot{\partial}_rT^{\,i}_{j})C^{\,r}_{hk|0}\,;\]
we get
\[ 2L\,\dot{\partial_{j}}F_{ik}=\rho^{}_{|k}\,h_{ij}-\rho^{}_{|i}\,h_{jk}\,.\]
If $b_i$ is a gradient vector, i.e. $2F^{}_{ij}=b^{}_{i|j}-b^{}_{j|i}=0$. Then above equation becomes
\begin{equation*} \rho^{}_{|k}\,h_{ij}-\rho^{}_{|i}\,h_{jk}=0\end{equation*}
which after contraction by $y^{k}$ gives $\rho^{}_{|k}\,y^{k}=0$. Differentiating $\rho^{}_{|k}\,y^{k}=0$ partially with respect to $y^{j}$, and using the commutation formula $\dot{\partial}_{j}(\rho^{}_{|k})-(\dot{\partial}_{j}\rho)_{|k}=-(\dot{\partial}_r\rho)C^{\,r}_{jk|0}$ and the fact that $\rho$ is a function of position only, we get $\rho^{}_{|j}=0$ and therefore $\partial_{j}\rho=0$. This completes the proof.
\end{proof}

Now, we find the condition for which the Cartan connection coefficients for both spaces $F^n$ and $^{*}F^n$ are the same, i.e. $^{*}F^{\,i}_{jk}=F^{\,i}_{jk}$ then $D^{i}_{jk}=0$. Therefore (3.15) and (3.16) gives $E^{}_{i0}=F^{}_{i0}$. This will give
\begin{equation}b^{}_{0|i}=0\,,\end{equation}
i.e. $\beta^{}_{|i}=0$. Differentiating $\beta^{}_{|i}=0$ partially with respect to $y^{j}$, and using the commutation formula $\dot{\partial}_{j}(\beta^{}_{|i})-(\dot{\partial}_{j}\beta)_{|i}=-(\dot{\partial}_r\beta)C^{\,r}_{ij|0}$, we get
\begin{equation}b^{}_{j|i}=b^{}_{r}\,C^{\,r}_{ij|0}.\end{equation}
This gives $F^{}_{ij}=0$ and then in view of Lemma \ref{lem2}, $F^{}_{ij}=0$ implies $\rho^{}_{i}=\rho^{}_{|i}=0$.\\
Taking \textsl{h}-covariant derivative of (1.1)(\textit{ii}) and using $L_{|k}=0,\,\rho^{}_{|k}=0$ and $h_{ij|k}=0$, we get
$(b_r\,C^{\,r}_{ij})^{}_{|k}=\big(\frac{\rho}{L}\,h_{ij}\big)^{}_{|k}=0$.
This gives
\begin{equation*}b^{}_{r|k}\,C^{\,r}_{ij}+b^{}_{r}\,C^{\,r}_{ij|k}=0\,.\end{equation*}
From (3.31), we get $b^{}_{r|k}=b^{}_{k|r}$, then above equation becomes
\[b^{}_{k|r}\,C^{\,r}_{ij}+b^{}_{r}\,C^{\,r}_{ij|k}=0\,.\]
Contracting by $y^k$, we get $b^{}_{0|r}\,C^{\,r}_{ij}+b^{}_{r}\,C^{\,r}_{ij|0}=0$. Using (3.30) and (3.31), this gives $b^{}_{i|j}=0$, i.e. the \textsl{h}-vector $b^{}_{i}$ is parallel with respect to the Cartan connection of $F^n$.

Conversely, if $b^{}_{i|j}=0$ then we get $E^{}_{ij}=0=F^{}_{ij}$ and $\beta^{}_{i}=\beta^{}_{|i}=b^{}_{j|i}y^j=0$. In view of Lemma \ref{lem2}, $F^{}_{ij}=0$ implies $\rho^{}_{i}=\rho^{}_{|i}=0$. Therefore from (3.19) we get $D^{i}_{00}=0$ and then $G^{}_{ij}=0$ and $G^{}_{j}=0$. This gives $D^{i}_{0j}=0$ and then $H^{}_{jik}=0$ and $H^{}_{ik}=0$. Therefore (3.29) implies $D^{i}_{jk}=0$ and then $^{*}F^{\,i}_{jk}=F^{\,i}_{jk}\,$. Thus, we have:

\begin{theorem}\label{th1} For the Kropina change with an \textsl{h}-vector, the Cartan connection coefficients for both spaces $F^n$ and $^{*}F^n$ are the same if and only if the \textsl{h}-vector $b_{i}$ is parallel with respect to the Cartan connection of $F^{n}$.
\end{theorem}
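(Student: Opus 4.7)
The plan is to work entirely with the difference tensor $D^{i}_{jk}={}^*F^{\,i}_{jk}-F^{\,i}_{jk}$, so that $^*F^{\,i}_{jk}=F^{\,i}_{jk}$ is equivalent to $D^{i}_{jk}=0$. By Proposition~3.1 the three pieces $D^{i}_{00}$, $D^{i}_{0j}$, $D^{i}_{jk}$ are given explicitly by (3.19), (3.24), (3.29) in terms of $E_{ij}$, $F_{ij}$, $\beta_{|i}$ and $\rho_{|i}$ (together with algebraic quantities like $\tau$, $\rho$, $L$, $m_i$). The theorem therefore reduces to matching the vanishing of this list of ingredients with the single tensor identity $b_{i|j}=0$.

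\medskip

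For the easier (sufficient) direction, assume $b_{i|j}=0$. Then at once $E_{ij}=F_{ij}=0$, $\beta_{|i}=b_{j|i}y^j=0$, and Lemma~\ref{lem2} (applied to $F_{ij}=0$) gives $\rho_{|i}=0$. Pushing these zeros through the cascade — (3.19) $\Rightarrow D^{i}_{00}=0$; (3.21) and (3.23) $\Rightarrow G_{ij}=G_{j}=0$; (3.24) $\Rightarrow D^{i}_{0j}=0$; (3.27) and (3.28) $\Rightarrow H_{jik}=H_{ik}=0$; and finally (3.29) $\Rightarrow D^{i}_{jk}=0$ — settles this direction as a pure substitution exercise.

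\medskip

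For the (necessary) direction, assume $D^{i}_{jk}=0$, so in particular $D^{r}_{0j}=D^{r}_{00}=0$. Substituting into (3.15) and (3.16) forces both $E_{i0}$ and $F_{i0}$ to equal $\beta_0 m_i/\beta$, and their difference is $b_{0|i}$; hence $\beta_{|i}=0$. I would then upgrade this contracted identity as follows: differentiating $\beta_{|i}=0$ in $y^j$ and using the Finsler commutation formula $\dot{\partial_{j}}(\beta_{|i})-(\dot{\partial_{j}}\beta)_{|i}=-(\dot{\partial_{r}}\beta)C^{r}_{ij|0}$ gives $b_{j|i}=b_r C^{r}_{ij|0}$; symmetry of the right-hand side in $(i,j)$ yields $F_{ij}=0$, so Lemma~\ref{lem2} again delivers $\rho_{|i}=0$. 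Next, take the h-covariant derivative of the defining relation $LC^{h}_{ij}b_h=\rho h_{ij}$; because $L_{|k}=h_{ij|k}=\rho_{|k}=0$ it reduces to $b_{r|k}C^{r}_{ij}+b_r C^{r}_{ij|k}=0$. Applying the established symmetry $b_{r|k}=b_{k|r}$, contracting with $y^k$, and using $b_{0|r}=0$ leaves $b_r C^{r}_{ij|0}=0$, which combined with $b_{j|i}=b_r C^{r}_{ij|0}$ yields the desired $b_{i|j}=0$.

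\medskip

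The principal obstacle is the bootstrap in the necessary direction: the algebraic machinery of Proposition~3.1 only produces the contracted vanishing $\beta_{|i}=0$, and promoting this to the full tensor identity $b_{i|j}=0$ requires interlocking the Finsler commutation formula with Lemma~\ref{lem2} and the $h$-vector defining relation in a carefully staged sequence. Once that bootstrap is in place, both directions follow from routine (if bulky) substitution into the formulas of Proposition~3.1.
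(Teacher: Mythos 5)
Your proposal is correct and follows essentially the same route as the paper: the sufficiency direction is the same substitution cascade through (3.19), (3.21)--(3.24), (3.27)--(3.29), and the necessity direction uses the identical bootstrap --- $E_{i0}=F_{i0}$ from (3.15)--(3.16), hence $b_{0|i}=0$, then the commutation formula to get $b_{j|i}=b_{r}C^{\,r}_{ij|0}$, Lemma~\ref{lem2} for $\rho_{|i}=0$, and the $h$-covariant derivative of $LC^{h}_{ij}b_{h}=\rho h_{ij}$ contracted by $y^{k}$ to conclude $b_{i|j}=0$.
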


Transvecting (3.4) by $y^{j}$ and using $F^{\,i}_{jk}\,y^{j}=G^{\,i}_{k}$, we get
\begin{equation}^{*}G^{\,i}_{k}=G^{\,i}_{k}+D^{\,i}_{0k}\,.\end{equation}
Further transvecting (3.32) by $y^{k}$ and using $G^{\,i}_{k}\,y^{k}=2\,G^{\,i}$, we get
\begin{equation}2\,^{*}G^{\,i}=2\,G^{\,i}+D^{\,i}_{00}\,.\end{equation}
Differentiating (3.32) partially with respect to $y^{h}$ and using $\dot{\partial_{h}}G^{i}_{k}=G^{\,i}_{kh}$, we have
\begin{equation} ^{*}G^{\,i}_{kh}=G^{\,i}_{kh}+\dot{\partial}_{h}D^{\,i}_{0k}\,,\end{equation}
where $G^{\,i}_{kh}$ are the Berwald connection coefficients.

Now, if the \textsl{h}-vector $b^{}_{i}$ is parallel with respect to the Cartan connection of $F^n$, then by Theorem \ref{th1}, the Cartan connection coefficients for both spaces $F^n$ and $^{*}F^n$ are the same, therefore $D^{i}_{jk}=0$. Then from (3.34), we get $^{*}G^{\,i}_{kh}=G^{\,i}_{kh}$.

\noindent Thus, we have:
\begin{theorem}\label{th2} For the Kropina change with an \textsl{h}-vector, if the \textsl{h}-vector $b_{i}$ is parallel with respect to the Cartan connection of $F^{n}$. then the Berwald connection coefficients for both the spaces $F^{n}$ and $^{*}F^{n}$ are the same.
\end{theorem}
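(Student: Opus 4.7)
The plan is to derive this statement as an immediate consequence of Theorem \ref{th1} via equation (3.34). Since the hypothesis here is precisely the one appearing in Theorem \ref{th1}, I will first invoke that theorem to conclude that the Cartan connection coefficients coincide, namely $^{*}F^{\,i}_{jk} = F^{\,i}_{jk}$, which by the defining relation (3.4) is equivalent to the vanishing of the difference tensor, $D^{\,i}_{jk}=0$.

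Once $D^{\,i}_{jk}$ is known to vanish identically, I would then push this information through the Berwald level by using the chain (3.32)--(3.34). Contracting $D^{\,i}_{jk}=0$ with $y^j$ yields $D^{\,i}_{0k}=0$, and a $\dot{\partial}_h$-differentiation then gives $\dot{\partial}_h D^{\,i}_{0k}=0$. Substituting directly into (3.34), which was obtained earlier by differentiating the spray relation, produces $^{*}G^{\,i}_{kh}=G^{\,i}_{kh}$, which is the desired conclusion.

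I do not anticipate a genuine obstacle, since all the hard work (solving for the difference tensor via Lemma 3.1 and Proposition 3.1, and tracking down the conditions forcing $E_{ij}=F_{ij}=\rho_{|i}=0$) has already been carried out in establishing Theorem \ref{th1}. The present statement is effectively a corollary, made possible by the fact that (3.32)--(3.34) transport equality from the Cartan connection to the Berwald connection through purely $v$-partial differentiation. The only thing to verify carefully is that the passage $D^{\,i}_{jk}=0 \Rightarrow \dot{\partial}_h D^{\,i}_{0k}=0$ is legitimate, but this is routine: the left-hand side is identically zero as a function of $(x,y)$, so any $y$-derivative vanishes as well.
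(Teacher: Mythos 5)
Your proposal is correct and follows essentially the same route as the paper: invoke Theorem \ref{th1} to get $D^{\,i}_{jk}=0$, hence $\dot{\partial}_{h}D^{\,i}_{0k}=0$, and conclude via (3.34). The extra care you take in spelling out the contraction by $y^{j}$ and the $\dot{\partial}_{h}$-differentiation is sound but adds nothing beyond what the paper already does implicitly.
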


\section{Relation between Projective change and Kropina change with an \textsl{h}-vector}
~~~~We consider two Finsler spaces $F^{n}=(M^{n},L)$ and $^{*}F^{n}=(M^{n},\,^{*}L)$. If any geodesic on $F^{n}$ is also a geodesic on $^{*}F^{n}$ and the inverse is true, the change $L\rightarrow\,^{*}L$ of the metric is called \textit{projective}. A geodesic on $F^n$ is given by
\[\frac{dy^i}{dt}+2\,G^i(x,y)=\tau\,y^i\,;\qquad \tau=\frac{d^2s/dt^2}{ds/dt}.\]
The change $L\rightarrow\,^{*}L$ is a projective change if and only if there exists a scalar $P(x,y)$ which is positively homogeneous of degree one in $y^i$ and satisfies \cite{Ma92}
\begin{equation*}\,^{*}G^{\,i}(x,y)=\,G^{\,i}(x,y)+P(x,y)\,y^{\,i}\,.\end{equation*}
Now, we find condition for the Kropina change (1.3) with \textsl{h}-vector to be projective. From (3.33), it follows that the Kropina change with an \textsl{h}-vector is projective if and only if $D^{\,i}_{00}=2\,P\,y^i$. Then from (3.19), we get
\begin{equation}\begin{split}2\,P\,y^i= & l^i\,\tau\,\Big\{\Big(\frac{2}{\beta}\beta^{}_{0}m^2-2F^{}_{\beta 0}\Big)\Big(\frac{2}{\beta}b^2-\frac{\rho}{L}\Big)^{-1}-E^{}_{00}\Big\}\\
&-\frac{2\tau^2}{2-\rho\tau}\Big(\frac{2}{\beta}\beta^{}_{0}m^2-2F^{}_{\beta 0}\Big)\Big(\frac{2}{\beta}b^2-\frac{\rho}{L}\Big)^{-1}\,m^i+\frac{2L\tau}{2-\rho\tau}\Big(\frac{1}{\beta}\,\beta^{}_{0}\,m^i-F^{\,i}_{0}\Big).\end{split}\end{equation}
Contracting (4.1) by $y^{}_{i}$ and using $m^i\,y^{}_{i}=0=F^i_0\,y^{}_{i}$, we get
\[2\,P\,L^2=\tau\,\Big\{\Big(\frac{2}{\beta}\beta^{}_{0}m^2-2F^{}_{\beta 0}\Big)\Big(\frac{2}{\beta}b^2-\frac{\rho}{L}\Big)^{-1}-E^{}_{00}\Big\}\,L\,,\quad\textrm{i.e.},\]
\begin{equation}P=\frac{\tau}{2\,L}\,\Big\{\Big(\frac{2}{\beta}\beta^{}_{0}m^2-2F^{}_{\beta 0}\Big)\Big(\frac{2}{\beta}b^2-\frac{\rho}{L}\Big)^{-1}-E^{}_{00}\Big\}.\end{equation}
Putting the value of $P$ in (4.1), we get
\begin{equation*}-\frac{2\tau^2}{2-\rho\tau}\Big(\frac{2}{\beta}\beta^{}_{0}m^2-2F^{}_{\beta 0}\Big)\Big(\frac{2}{\beta}b^2-\frac{\rho}{L}\Big)^{-1}\,m^i+\frac{2L\tau}{2-\rho\tau}\Big(\frac{1}{\beta}\,\beta^{}_{0}\,m^i-F^{\,i}_{0}\Big)=0,\end{equation*}
i.e.,
\[F^{i}_{0}=\frac{\beta^{}_{0}}{\beta}\,m^i-\frac{1}{\beta}\,m^{}_{r}D^{\,r}_{00}\,m^i.\]
Transvecing by $g^{}_{ij}$, we get
\begin{equation}F^{}_{i0}=\frac{\beta^{}_{0}}{\beta}\,m^{}_{i}-\frac{1}{\beta}\,m^{}_{r}D^{\,r}_{00}\,m^{}_{i}\,.\end{equation}
Using (4.3) in (3.16), and referring $2\tau-\rho\tau^2\neq 0$, we get $L^{}_{ir}\,D^{\,r}_{00}=0$, which transvecting by $m^i$ and using $L_{ir}m^i=\frac{1}{L}\,m^{}_{r}$, we get $m^{}_{r}D^{\,r}_{00}=0$, and then (4.3) becomes
\begin{equation}F^{}_{i0}=\frac{\beta^{}_{0}}{\beta}\,m^{}_{i}\,.\end{equation}
This equation (4.4) is a necessary condition for the Kropina change with an \textsl{h}-vector to be a projective change.

Conversely, if (4.4) satisfies, then (3.16) gives
\begin{equation*}\Big\{(2\tau-\rho\tau^2)L_{ir}+\frac{2\tau^2}{\beta}m_im_r\Big\}D^{\,r}_{00}=0.\end{equation*}
Transvecting by $m^i$ and referring $\frac{(2\tau-\rho\tau^2)}{L}+\frac{2\tau^2}{\beta}m^2\neq 0$, we get $m^{}_{r}D^{\,r}_{00}=A=0$ and then (3.19) gives $D^{\,i}_{00}=-E^{}_{00}\,\tau\,l^i$. Therefore $^{*}F^{n}$ is projective to $F^{n}$. Thus, we have:
\begin{theorem} The Kropina change (1.3) with an \textsl{h}-vector is projective if and only if the condition (4.4) is satisfied.\end{theorem}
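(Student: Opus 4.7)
My plan is to split the proof into the standard two implications, both of which reduce to manipulating the explicit formula (3.19) for $D^{i}_{00}$ together with the characterization (3.33) of the spray-coefficient difference.

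For the forward direction, I would start from the definition of a projective change, namely $^{*}G^{i}=G^{i}+P\,y^{i}$ for some positively $1$-homogeneous scalar $P$, and combine it with (3.33) to conclude $D^{i}_{00}=2P\,y^{i}$. Substituting this into (3.19) yields an expression of the form $2P\,y^{i}=(\text{scalar})\,l^{i}+(\text{scalar})\,m^{i}+\frac{2L\tau}{2-\rho\tau}\bigl(\frac{1}{\beta}\beta_{0}m^{i}-F^{i}_{0}\bigr)$. Since $m^{i}$ and $F^{i}_{0}$ are both orthogonal to $y_{i}$ (via $m^{i}y_{i}=0$ and the skew-symmetry of $F_{ij}$), contracting with $y_{i}$ isolates $P$ and reproduces (4.2). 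Feeding (4.2) back into the previous identity cancels the $l^{i}$-terms and leaves a relation among $m^{i}$, $F^{i}_{0}$ and $D^{r}_{00}m_{r}$; lowering indices with $g_{ij}$ and clearing constants gives a first-pass version of (4.3).

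At that stage the main technical point is to show that the extra $m_{r}D^{r}_{00}$ correction in (4.3) actually vanishes. For this I would plug (4.3) into (3.16); since $2\tau-\rho\tau^{2}\ne 0$, a brief cancellation gives $L_{ir}D^{r}_{00}=0$, and then transvecting with $m^{i}$ and using the Finslerian identity $L_{ir}m^{i}=\tfrac{1}{L}m_{r}$ forces $m_{r}D^{r}_{00}=0$. Inserting this back yields the clean form (4.4), completing necessity.

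For the converse, I would assume (4.4) and substitute it into the right-hand side of (3.16) to obtain $\bigl\{(2\tau-\rho\tau^{2})L_{ir}+\tfrac{2\tau^{2}}{\beta}m_{i}m_{r}\bigr\}D^{r}_{00}=0$. Transvecting with $m^{i}$ and invoking the non-degeneracy hypothesis $\tfrac{2\tau-\rho\tau^{2}}{L}+\tfrac{2\tau^{2}}{\beta}m^{2}\ne 0$ gives $m_{r}D^{r}_{00}=0$; putting this into (3.19) collapses all but the $l^{i}$-term and yields $D^{i}_{00}=-E_{00}\tau\,l^{i}$, which is proportional to $y^{i}$. By (3.33), this means $^{*}G^{i}-G^{i}$ is a scalar multiple of $y^{i}$, so the change is projective. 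The only subtle step in the whole argument is the one I expect to be delicate, namely justifying that the residual $m^{i}$-coefficient in the necessity part really collapses; everything else is linear algebra on (3.19) and (3.33).
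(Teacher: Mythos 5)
Your proposal is correct and follows essentially the same route as the paper: it derives $D^{i}_{00}=2P\,y^{i}$ from (3.33), isolates $P$ by contracting (3.19) with $y_{i}$, kills the residual $m_{r}D^{r}_{00}$-term via (3.16) and the identity $L_{ir}m^{i}=\tfrac{1}{L}m_{r}$ to reach (4.4), and reverses the argument for sufficiency exactly as the paper does. No gaps.
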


\begin{center}{\Large{\textbf{Ackowledgement}}}\end{center}
~~~~~~M. K. Gupta gratefully acknowledges the financial support provided by the University Grants Commission (UGC), Government of India through UGC-BSR Research Start-up-Grant.

\small

\end{document}